\newcommand{\be}{\begin{equation}}
\newcommand{\ee}{\end{equation}}
\newcommand{\bea}{\begin{eqnarray*}}
\newcommand{\eea}{\end{eqnarray*}}
\newcommand{\ba}{\begin{array}}
\newcommand{\ea}{\end{array}}
\newcommand{\bi}{\begin{itemize}}
\newcommand{\ei}{\end{itemize}}
\newcommand{\bc}{\begin{center}}
\newcommand{\ec}{\end{center}}
\newcommand{\bfr}{\begin{flushright}}
\newcommand{\efr}{\end{flushright}}
\newtheorem{Pa}{Paper}[section]
\newtheorem{theorem}[Pa]{{\bf Theorem}}
\newtheorem{remark}[Pa]{{\bf Remark}}
\begin{document}

\title{Semi-discrete Gr\"uss-Voronovskaya-type and Gr\"uss-type estimates for Bernstein-Kantorovich polynomials}
\author{Sorin G. Gal\\
University of Oradea\\
Department of Mathematics and Computer Science\\
Str. Universitatii Nr. 1\\
410087 Oradea, Romania\\
e-mail : galso@uoradea.ro}
\date{}
\maketitle

{\bf Abstract.} The aim of this note is to prove a semi-discrete Gr\"uss-Voronovskaya-type
estimate for Bernstein-Kantorovich polynomials. Also, as a consequence, a perturbed Gr\"uss-type estimate is obtained.

{\bf Keywords.} Bernstein-Kantorovich polynomials, semi-discrete Gr\"uss-Voronovskaya-type estimate, perturbed Gr\"uss-type estimate, modulus of continuity.

\textbf{AMS 2000 Mathematics Subject Classification}: 41A36, 41A25, 41A60.

\section{Introduction}

A classical result in approximation theory is the asymptotic qualitative result of Voronovskaya for Bernstein polynomials in \cite{Voronov}. It was generalized by Bernstein in \cite{Bern} and then it was extended to positive and linear operators by Mamedov in \cite{Mamedov}. Also, quantitative estimates of Mamedov's result were obtained in terms of the least concave majorant and a $K$-functional by Gonska in \cite{Gonska2} and by Gavrea-Ivan in \cite{Gav2}.

Another classical result is the well-known Gr\"uss inequality for positive linear functionals $L : C[0,1] \to \mathbb{R}$. This inequality gives an upper bound for the generalized Chebyshev functional
$$
T(f,g) := L(f\cdot g) - L(f) \cdot L(g) , \quad f,g \in C[0,1].
$$
For positive and linear operators $H : C[0,1] \to C[0,1]$ reproducing constant functions, this was investigated for the first time in \cite{AGR}, then obtaining in \cite {GoRaRu} the estimate
$$
|H(fg;x) - H(f;x) \cdot H(g;x)|
$$
$$
\le \frac14 \cdot \tilde{\omega}_1 (f; 2 \cdot \sqrt{H(e_2;x)-H(e_1;x)^2}) \cdot
\tilde{\omega}_1 (g;2\cdot \sqrt{H(e_2;x) - H(e_1;x)^2})
$$
where $\tilde{\omega}_1$ is the least concave majorant of $\omega_1$ and $e_i (x) = x^i$ for $x \in [0,1]$.

A mixture between the above two classical results are the so-called Gr\"uss-Voronovskaya-type results obtained for the first time in the paper \cite{Gal-Gonska} for Bernstein and  P\v{a}lt\v{a}nea operators.

On the other hand, in the very recent paper \cite{Gal-Discrete}, we generalized the asymptotic quantitative Voronovskaya-type results, by obtaining semi-discrete quantitative Voronovskaya-type results for general positive and linear operators.

The main goal of this short note is to use the result in \cite{Gal-Discrete} to obtain in Section 2 a semi-discrete Gr\"uss-Voronovskaya-type results for the Bernstein-Kantorovich polynomials. Also, as  a consequence, we easily obtain a perturbed Gr\"uss-type estimate for the same polynomials.

\section{Semi-discrete Gr\"uss-Voronovskaya-type estimate}

The main result is the following semi-discrete Gr\"uss-Voronovskaya-type estimate, for the Bernstein-Kantorovich polynomials given by the formula (see \cite{Kant})
$$K_{n}(f)(x)=\sum_{k=0}^{n}p_{n, k}(x)\cdot (n+1)\int_{k/(n+1)}^{(k+1)/(n+1)}f(t) d t,$$
where $p_{n, k}(x){n\choose k}x^{k}(1-x)^{n-k}$ and $f:[0, 1]\to \mathbb{R}$ is Riemann (or Lebesgue) integrable on $[0, 1]$.

Also, for $n\in \mathbb{N}$ and $,y\in [0, 1]$, let us denote
\begin{equation*}
\begin{split}
E_{n}(x, y)&=\frac{1}{(n+1)^{2}}\cdot \left (x(1-x)(n-1) + \frac{1}{3}\right )+(x-y)\frac{1-2 x}{2(n+1)}\\
F_{n}(x)&=\frac{1}{(n+1)^{2}}\cdot \left (x(1-x)(n-1) + \frac{1}{3}\right ).
\end{split}
\end{equation*}
Notice that clearly we have $|E_{n}(x, y)|={\mathcal{O}}\left (\frac{1}{n}\right )$ and $|F_{n}(x)|={\mathcal{O}}\left (\frac{1}{n}\right )$, uniformly with respect to $x, y\in [0, 1]$.
\begin{theorem}\label{thm4.1} For all $f, g\in C^{2}[0, 1]$, $n\in \mathbb{N}$ and $x, y\in [0, 1]$, $x\not=y$ we have
\begin{equation*}
\begin{split}
&\left |K_{n}(f g)(x)-K_{n}(f)(x)\cdot K_{n}(g)(x)+(x-y)\cdot \frac{1-2x}{2(n+1)}\left ([x, y; f]\cdot [x, y; g]-f^{\prime}(x)g^{\prime}(x)\right )\right .\\
-&F_{n}(x)\cdot f^{\prime}(x)\cdot g^{\prime}(x)|\\
\le& \left [\frac{1}{(n+1)^{2}}(x(1-x)(n-1)+ 1/3)+|x-y|\cdot \frac{1}{\sqrt{3}\sqrt{n+1}}\right ]\\
\cdot &\left [\omega_{1}\left ((f g)^{\prime \prime}; |x-y|+\frac{2 \sqrt{6}}{\sqrt{n+1}}\right )
+\|g\|\omega_{1}\left (f^{\prime \prime}; |x-y|+\frac{2 \sqrt{6}}{\sqrt{n+1}}\right )\right .\\
&\left . + \|f\|\omega_{1}\left (g^{\prime \prime}; |x-y|+\frac{2 \sqrt{6}}{\sqrt{n+1}}\right )\right ]+|K_{n}(f)(x)-f(x)|\cdot |K_{n}(g)(x)-g(x)|,
\end{split}
\end{equation*}
where $[x, y; f]=\frac{f(x)-f(y)}{x-y}$, $\omega_{1}(f; \delta):=\sup\{|f(x)-f(y)|; x, y\in [0, 1], |x-y|\le \delta\}$ and $\|f\|$ denotes the uniform norm of $f$.
\end{theorem}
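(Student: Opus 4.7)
The starting point is the classical Gr\"uss-type algebraic identity
\begin{equation*}
K_{n}(fg)(x) - K_{n}(f)(x)\cdot K_{n}(g)(x) = K_{n}(h_{x})(x) - \bigl(K_{n}(f)(x) - f(x)\bigr)\bigl(K_{n}(g)(x) - g(x)\bigr),
\end{equation*}
where $h_{x}(t) := (f(t) - f(x))(g(t) - g(x))$. After the triangle inequality, the second term on the right already accounts for the trailing $|K_{n}(f)(x) - f(x)|\cdot |K_{n}(g)(x) - g(x)|$ summand in the claimed bound, so the entire problem reduces to estimating $K_{n}(h_{x})(x)$ minus the appropriate ``quadratic part''.

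The next step is to invoke the semi-discrete Voronovskaya-type estimate from \cite{Gal-Discrete} applied to the $C^{2}$-function $\phi = h_{x}$. Using the moments $K_{n}(e_{1})(x)-x = \frac{1-2x}{2(n+1)}$ and $K_{n}((t-x)^{2})(x) = F_{n}(x)$, together with $K_{n}((t-x)(t-y))(x) = F_{n}(x) + (x-y)\frac{1-2x}{2(n+1)} = E_{n}(x,y)$ and the Kantorovich bound $\sqrt{F_{n}(x)} \le \frac{1}{\sqrt{3}\sqrt{n+1}}$, that result should take the shape
\begin{equation*}
\left|K_{n}(\phi)(x) - \phi(x) - [x,y;\phi]\cdot \tfrac{1-2x}{2(n+1)} - \tfrac{\phi''(x)}{2}\, E_{n}(x,y)\right| \le \left[F_{n}(x) + |x-y|\cdot \tfrac{1}{\sqrt{3}\sqrt{n+1}}\right]\omega_{1}\!\left(\phi'';\, |x-y|+\tfrac{2\sqrt{6}}{\sqrt{n+1}}\right).
\end{equation*}

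Specializing this to $\phi = h_{x}$, I would plug in the three identities $h_{x}(x) = 0$, $h_{x}''(x) = 2f'(x)g'(x)$, and
\begin{equation*}
[x,y;h_{x}] = \frac{h_{x}(x) - h_{x}(y)}{x-y} = -\frac{(f(y)-f(x))(g(y)-g(x))}{x-y} = -(x-y)\,[x,y;f]\cdot [x,y;g].
\end{equation*}
A short rearrangement then shows that $\frac{h_{x}''(x)}{2} E_{n}(x,y) + [x,y;h_{x}]\cdot \frac{1-2x}{2(n+1)}$ equals precisely $F_{n}(x)f'(x)g'(x) - (x-y)\frac{1-2x}{2(n+1)}\bigl([x,y;f][x,y;g] - f'(x)g'(x)\bigr)$, which is exactly the combination of correction terms appearing on the left-hand side of Theorem~\ref{thm4.1}. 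The remaining factor $\omega_{1}(h_{x}''; \delta)$ is handled by the decomposition $h_{x}'' = (fg)'' - f(x)g'' - g(x)f''$ and subadditivity, yielding
\begin{equation*}
\omega_{1}(h_{x}''; \delta) \le \omega_{1}((fg)''; \delta) + \|f\|\,\omega_{1}(g''; \delta) + \|g\|\,\omega_{1}(f''; \delta),
\end{equation*}
which matches the three-term modulus on the right-hand side.

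The principal obstacle I foresee is a careful bookkeeping of the constants coming from \cite{Gal-Discrete}: the prefactor $F_{n}(x) + |x-y|\tfrac{1}{\sqrt{3}\sqrt{n+1}}$ and the argument $|x-y| + \tfrac{2\sqrt{6}}{\sqrt{n+1}}$ of $\omega_{1}$ are forced by specific Cauchy--Schwarz estimates on moments such as $K_{n}((t-x)^{2}(t-y)^{2})(x)$, and one must check that the Bernstein--Kantorovich moment formulas derived here feed into that general inequality so as to produce these exact constants. Once that transcription is verified, the rest is a purely mechanical combination of the two displayed identities.
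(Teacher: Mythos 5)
Your proposal is correct and follows essentially the same route as the paper: the paper's proof is precisely the algebraic identity you state (with $K_{n}(h_{x})$ expanded by linearity into the three terms for $fg$, $f$, $g$), followed by three applications of the semi-discrete Voronovskaya estimate of Corollary 4.1 in \cite{Gal-Discrete}, which is exactly the inequality you guessed, with the same moment values and constants. The only cosmetic difference is that you apply that corollary once to $h_{x}$ and then split $\omega_{1}(h_{x}'';\delta)$ by subadditivity, whereas the paper applies it separately to $fg$, $f$ and $g$; the resulting bound is identical.
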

\begin{proof}
Supposing that $f, g\in C^{2}[0, 1]$ and using Corollary 4.1  in \cite{Gal-Discrete}, we obtain
\begin{equation*}
\begin{split}
&\left |K_{n}(f g)(x)-K_{n}(f)(x)\cdot K_{n}(g)(x)+(x-y)\cdot \frac{1-2x}{2(n+1)}\left ([x, y; f]\cdot [x, y; g]-f^{\prime}(x)g^{\prime}(x)\right )\right .\\
-&F_{n}(x)\cdot f^{\prime}(x)\cdot g^{\prime}(x)|\\
&=\left |\left (K_{n}(f g)(x)-f(x)g(x)-\frac{1-2 x}{2(n+1)}\cdot [x, y; f g]\right )-E_{n}(x, y)\cdot \frac{(f(x) g(x))^{\prime \prime}}{2}\right .\\
&\left . -g(x)\left [\left (K_{n}(f)(x)-f(x)-\frac{1-2 x}{2(n+1)}\cdot [x, y; f]\right )-E_{n}(x, y)\cdot \frac{f^{\prime \prime}(x)}{2}\right ]\right .\\
&\left . -f(x)\left [\left (K_{n}(g)(x)-g(x)-\frac{1-2 x}{2(n+1)}\cdot [x, y; g]\right )-E_{n}(x, y)\cdot \frac{g^{\prime \prime}(x)}{2}\right ]\right .\\
&+\left . [K_{n}(f)(x)-f(x)]\cdot [g(x)-K_{n}(g)(x)]\right |\\
&\le \left [\frac{1}{(n+1)^{2}}(x(1-x)(n-1)+ 1/3)+|x-y|\cdot \frac{1}{\sqrt{3}\sqrt{n+1}}\right ]\\
&\cdot \left [\omega_{1}\left ((f g)^{\prime \prime}; |x-y|+\frac{2 \sqrt{6}}{\sqrt{n+1}}\right )
+\|g\|\cdot \omega_{1}\left (f^{\prime \prime}; |x-y|+\frac{2 \sqrt{6}}{\sqrt{n+1}}\right )\right .\\
&+ \left .\|f\|\cdot
\omega_{1}\left (g^{\prime \prime}; |x-y|+\frac{2 \sqrt{6}}{\sqrt{n+1}}\right )\right ]+|K_{n}(f)(x)-f(x)|\cdot |K_{n}(g)(x)-g(x)|,
\end{split}
\end{equation*}
which is exactly the estimate in the statement.
\end{proof}
\begin{remark} Let $f, g\in C^{3}[0, 1]$. Firstly, take $y\to x$, multiply by $n$ both members in the estimate in Theorem \ref{thm4.1} and
use the estimate in \cite{AH}, page 849, line 7 from below
$$|K_{n}(h)(x)-h(x)|\le \frac{1}{2 n}\|h^{\prime}\|+\frac{8}{9 n}\|h^{\prime\prime}\|, x\in [0, 1], n\in \mathbb{N}, h\in C^{2}[0, 1].$$
Then, since
$$n\cdot F_{n}(x)=\frac{n(n-1)}{(n+1)^{2}}x(1-x)+\frac{n}{3(n+1)^{2}},$$
by using the estimate in Theorem \ref{thm4.1}, we easily obtain
$$\|n[K_{n}(f g)-K_{n}(f)\cdot K_{n}(g)]-e_{1}(1-e_{1})f^{\prime}g^{\prime}\|={\mathcal{O}}\left (\frac{1}{\sqrt{n}}\right ),$$
thus recapturing the order of approximation in the classical Gr\"uss-Voronovskaya-type estimate given by Theorem 5.1 in \cite{AH}.
\end{remark}
\begin{remark} Since obviously
$$\left |K_{n}(f g)(x)-K_{n}(f)(x)\cdot K_{n}(g)(x)+(x-y)\cdot \frac{1-2x}{2(n+1)}\left ([x, y; f]\cdot [x, y; g]-f^{\prime}(x)g^{\prime}(x)\right )\right |$$
\begin{equation*}
\begin{split}
&\le \left |K_{n}(f g)(x)-K_{n}(f)(x)\cdot K_{n}(g)(x)+(x-y)\cdot \frac{1-2x}{2(n+1)}\left ([x, y; f]\cdot [x, y; g]-f^{\prime}(x)g^{\prime}(x)\right )\right .\\
&\left .-F_{n}(x)\cdot f^{\prime}(x)\cdot g^{\prime}(x)\right |+|F_{n}(x)|\cdot \left |f^{\prime}(x)\cdot g^{\prime}(x)\right |,
\end{split}
\end{equation*}
by Theorem 2.1, for all $f, g\in C^{2}[0, 1]$, $n\in \mathbb{N}$, $x, y\in [0, 1]$, $x\not=y$,  we immediately get the following estimate
\begin{equation*}
\begin{split}
&\left |K_{n}(f g)(x)-K_{n}(f)(x)\cdot K_{n}(g)(x)+(x-y)\cdot \frac{1-2x}{2(n+1)}\left ([x, y; f]\cdot [x, y; g]-f^{\prime}(x)g^{\prime}(x)\right )\right |\\
&\le \left [\frac{1}{(n+1)^{2}}(x(1-x)(n-1)+ 1/3)+|x-y|\cdot \frac{1}{\sqrt{3}\sqrt{n+1}}\right ]\\
&\cdot \left [\omega_{1}\left ((f g)^{\prime \prime}; |x-y|+\frac{2 \sqrt{6}}{\sqrt{n+1}}\right )
+\|g\|\cdot \omega_{1}\left (f^{\prime \prime}; |x-y|+\frac{2 \sqrt{6}}{\sqrt{n+1}}\right )\right .\\
&+ \left .\|f\|\cdot
\omega_{1}\left (g^{\prime \prime}; |x-y|+\frac{2 \sqrt{6}}{\sqrt{n+1}}\right )\right ]+|K_{n}(f)(x)-f(x)|\cdot |K_{n}(g)(x)-g(x)|\\
&+|F_{n}(x)|\cdot |f^{\prime}(x)g^{\prime}(x)|,
\end{split}
\end{equation*}
which can be considered as a "perturbed" (discrete) Gr\"uss-type estimate since for $y$ sufficiently close to $x$, the left-hand side of the above inequality, becomes sufficiently close to
$$|K_{n}(f g)(x)-K_{n}(f)(x)\cdot K_{n}(g)(x)|.$$

Now, if above  we take $y\to x$, then since $|F_{n}(x)|={\mathcal{O}}\left (\frac{1}{n}\right )$,
we immediately get
$$\|K_{n}(f g)-K_{n}(f)\cdot K_{n}(g)\|={\mathcal{O}}\left (\frac{1}{n}\right ),$$
which is the same order which can be obtained for the classical Gr\"uss-type estimate in terms of the least concave majorant of the modulus of continuity expressed by Theorem 4.2 in \cite{AH} for $f, g\in C^{2}[0, 1]$.
\end{remark}
\begin{remark} The results in this note suggest that based on other semi-discrete Vo\-ro\-nov\-ska\-ya-type results in \cite{Gal-Discrete}, to get for the Bernstein-Kantorovich polynomials other semi-discrete estimates of Gr\"uss-Voronovskaya-type and of Gr\"uss-type. Also, similar results can be obtained for other positive and linear operators too.
\end{remark}


\begin{thebibliography}{99}

\bibitem{AH}
Acu, A.M., Gonska, H.: Classical Kantorovich operators revisited. Ukr. Math. J. {\bf 71}, 843-852 (2019)

\bibitem{AGR}
Acu, A.M., Gonska, H., Ra\c sa, I.: Gr\"uss-type and Ostrovski-type inequalities in approximation theory.
Ukr. Math. J. {\bf 63}, 843-864 (2011)

\bibitem{Bern} Bernstein, S.N.: Compl\'ement \`a l'article de E. Voronovskaya "D\'etermination de la forme asymptotique de
l'approximation des fonctions par les polyn\^omes de M. Bernstein". C. R. Acad. Sci. URSS 86-92 (1932)

\bibitem{Gal-Discrete}
Gal, S.G.: Semi-discrete quantitative Voronovskaya-type theorems for positive linear operators. Result Math. {\bf 75}(3), Art. 117 (2020)

\bibitem{Gal-Gonska} Gal, S.G., Gonska, H.: Gr\"uss and Gr\"uss-Voronovskaya-type estimates for some Bernstein-type polynomials of real and complex variables. Jaen J. Approx. {\bf 7}(1), 97-122 (2015)

\bibitem{Gav2} Gavrea, I., Ivan, M.: The Bernstein Voronovskaja-type theorems for positive linear approximation operators. J. Approx. Theory
{\bf 192}, 291-296 (2015)

\bibitem{Gonska2}
Gonska, H.: On the degree of approximation in Voronovskaya's theorem. Stud. Univ. "Babes-Bolyai" ser. math.
{\bf LII} (3), 103-115 (2007)

\bibitem{GoRaRu} Gonska, H., Ra\c sa, I., Rusu, M.: \u Ceby\u sev-Gr\"uss-type inequalities revisited. Mathematica Slovaca {\bf 63}(5), 1007-1024 (2013)

\bibitem{Kant} Kantorovich, L.V.: Sur certains d\'eveloppements suivant les polyn\^omes
de la forme de S. Bernstein, I, II. C.R. Acad. Sci. URSS  563-568, 595-600 (1930)

\bibitem{Mamedov} Mamedov, R. G.: The asymptotic value of the approximation of multiply differentiable functions by positive linear
operators. Dokl. Akad. Nauk SSSR {\bf 146}, 1013-1016 (1962)

\bibitem{Voronov} Voronovskaja, E.: D\'etermination de la forme asymptotique de l'approximation des fonctions par les polyn\^omes de M.
Bernstein. C. R. Acad. Sci. URSS 79-85 (1932)




\end{thebibliography}
\end{document}